\begin{document}
\title[One side invertibility ]
{One side invertibility for implicit hyperbolic systems with delays}

\author[F. Haddouchi]
{Faouzi Haddouchi}

\address{Faouzi Haddouchi \newline
Department of Physics\\
University of Sciences
and Technology\\
USTO, El M'naouar, BP 1505, Oran, 31000, Algeria}
\email{haddouch@univ-usto.dz}

\subjclass[2000]{93C15, 93C25} \keywords{Invertibility; Infinite
dimensional systems; Implicit systems; delay systems}

\begin{abstract}
This paper deals with left invertibility problem of implicit
hyperbolic systems with delays in infinite dimensional Hilbert
spaces. From a decomposition procedure, invertibility for this class
of systems is shown to be equivalent to the left invertibility of a
subsystem without delays.
\end{abstract}

\maketitle \numberwithin{equation}{section}
\newtheorem{theorem}{Theorem}[section]
\newtheorem{corollary}[theorem]{Corollary}
\newtheorem{proposition}[theorem]{Proposition}
\newtheorem{remark}[theorem]{Remark}
\newtheorem{definition}[theorem]{Definition}
\newtheorem{lemma}[theorem]{Lemma}
\allowdisplaybreaks

\newcommand{\ip}[2]{\langle #1,#2\rangle}

\section{Introduction}

We shall deal here with left invertibility for a class of implicit
systems with delays in Hilbert spaces which are either left or right
invertible (hereafter called \textquotedblleft one side
invertible\textquotedblright) and whose state is described by
\begin{equation}  \label{eq-1}
E\overset{..}{z}(t) +\alpha \overset{.}{z}
(t)+A_{0}z(t)+A_{1}z(t-h)=Bu(t)
\end{equation}
With the output function
\begin{equation}  \label{eq-2}
y(t)=Cz(t)
\end{equation}
where $z\left( 0\right) =0$, $\overset{.}{z}(0)=0$ et $z(t)\equiv 0$, $%
\forall t\in \left[ -h,0\right[ $, ($h>0$), $\alpha \geq 0$,
$\overset{..}{z}(t) $, $\overset{.}{z}(t)$, $z(t)\in H$ (the state
space); $ u(t) \in U$ (the input space); $y(t)\in Y$ (the output
space). $E$, $A_{0}$, $A_{1}$, $B$ and $C$ are linear operators
defined as follows:
$E:H\longrightarrow H\ $; $A_{0}:H\longrightarrow H$; $%
A_{1}:H \longrightarrow H$; $B:U\longrightarrow H$;
$C:H\longrightarrow Y$, where $U$, $H$ and $Y$ are real Hilbert
spaces.

The concept of left invertibility is the problem of determining the
conditions under which a zero output corresponding to a zero initial
state can only be generated by a zero input. The left invertibility
problem has been investigated within its different versions in
detail in the finite dimensional case. We cite \cite{Bon},
\cite{Min} and \cite{Silv} whose approaches are generalized in this
study. In infinite dimension, this notion has been generalized
essentially for non implicit systems with delays and for the
particular case of bounded operators (see for instance
\cite{Malab}).

The aim of this paper is to extend this notion in the direction of
infinite dimensional linear systems with delays and to use this
approach for solving left invertibility for implicit hyperbolic
systems with delays. The motivation for considering this class is
given by the article of Bonilla \cite{Bon} that gave a natural left
or right inverse for implicit descriptions without delays.

Our first contribution is to give necessary and sufficient
conditions for the system to be left invertible. The paper is
structured as follows. The first part deals with solvability and
invertibility of this class of systems in the frequency domain. The
second part deals with systems without delays. A criterion will be
given for one side invertibility in this section.

\section{Solvability and one side invertibility}

Taking $\ y=\overset{.}{z}$, $\overset{.}{y}=\overset{..}{z}$, $%
w=\left(
\begin{array}{l}
z \\
\multicolumn{1}{c}{\overset{.}{z}}%
\end{array}%
\right) $, $\ \widetilde{B}=%
\begin{pmatrix}
0 \\
B%
\end{pmatrix}%
$, \  $\widetilde{A}_{0}=%
\begin{pmatrix}
0 & I \\
-A_{0} & -\alpha I%
\end{pmatrix}%
$,\\
$\ \widetilde{A}_{1}=%
\begin{pmatrix}
0 & 0 \\
-A_{1} & 0%
\end{pmatrix}%
$,\ $\widetilde{E}=%
\begin{pmatrix}
I & 0 \\
0 & E%
\end{pmatrix}%
$, $\widetilde{C}=%
\begin{pmatrix}
0 & 0 \\
C & 0%
\end{pmatrix}%
$.
\\

Then the system \eqref{eq-1}, \eqref{eq-2} can be written as:
\begin{equation}
(\widetilde{\Sigma })\left\{
\begin{array}{l}
\widetilde{E}\overset{.}{w}(t)=\widetilde{A}_{0}w(t)+\widetilde{A}
_{1}w(t-h)+\widetilde{B}u(t) \\
\widetilde{y}(t)=\widetilde{C}w(t)%
\end{array}
\right.
\end{equation}
As usual, $u$, $w$, $\widetilde{y}$ represent respectively the
input, state and output of the system $ (\widetilde{\Sigma })$. The
setting is very general in the sense that $\widetilde{E}$ is not
invertible, $\widetilde{A}_{0}$, $\widetilde{A}_{1}$ are unbounded
operators, $\widetilde{B}$ and $\widetilde{C}$ are restricted to be
bounded, uniqueness of the solution is not required, and an explicit
solution, will even not be demanded.

By using Laplace transform (this was used in \cite{Lewi}) the system
$ (\widetilde{\Sigma })$ can be rewritten as follows:
\begin{equation}
(\widehat{\Sigma })\left\{
\begin{array}{l}
(s\widetilde{E}-\widetilde{A}_{0}-\widetilde{A}_{1}\exp (-sh))\text{
}
\widehat{w}(s)=\widetilde{B}\widehat{u}(s) \\
\widehat{y}(s) =\widetilde{C}\widehat{w}(s)%
\end{array}
\right.
\end{equation}
where $s$ is the classical Laplace variable.

\begin{definition}  \label{def 2.1}
The system $ (\widehat{\Sigma })$ is said to be solvable if
$(s\widetilde{E}- \widetilde{A}_{0}-\widetilde{A}_{1}\exp (-sh))$ is
invertible.
\end{definition}

\begin{definition} \label{def 2.2}
The system $(\widetilde{\Sigma })$ is left invertible if the
following condition is fulfilled
\begin{equation}
\widetilde{y}(t)\equiv 0\ \ \Longrightarrow u(t)\equiv 0
\end{equation}
\end{definition}
By $U_{-1}(s)$ we shall denote all functions that are strictly
proper (see \cite{Zwart} for more detail and information). This
notion may also be expressed in terms of transfer function of the
system $(\widetilde{\Sigma })$ as follows.

\begin{lemma} \label{lem 2.3}
The system $(\widetilde{\Sigma })$ is left invertible if and only if
\begin{equation}
T(s,\exp (-sh))\widehat{u}(s)\equiv 0 \ \ \Longrightarrow
\widehat{u}(s) \equiv 0
\end{equation}
\end{lemma}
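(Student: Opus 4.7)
The plan is to show that the time-domain condition of Definition~\ref{def 2.2} and the frequency-domain condition stated in the lemma are two equivalent ways of expressing the injectivity of the input-to-output map, with the Laplace transform providing the translation between the two viewpoints.

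First, I would make explicit the meaning of $T(s,\exp(-sh))$: combining the two equations of $(\widehat{\Sigma })$ with the solvability assumption (Definition~\ref{def 2.1}), one may solve the state equation to get $\widehat{w}(s)=(s\widetilde{E}-\widetilde{A}_{0}-\widetilde{A}_{1}\exp(-sh))^{-1}\widetilde{B}\widehat{u}(s)$, and substituting this into the output equation yields
\begin{equation*}
\widehat{y}(s)=\widetilde{C}\bigl(s\widetilde{E}-\widetilde{A}_{0}-\widetilde{A}_{1}\exp(-sh)\bigr)^{-1}\widetilde{B}\,\widehat{u}(s)=:T(s,\exp(-sh))\,\widehat{u}(s),
\end{equation*}
so that $T(s,\exp(-sh))$ is simply the transfer operator of $(\widetilde{\Sigma })$.

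Next, I would invoke the injectivity of the Laplace transform on the class of strictly proper functions $U_{-1}(s)$ (this is the role of the reference to \cite{Zwart} in the text): a function vanishes identically in the time domain if and only if its Laplace transform vanishes identically in $s$. Applying this separately to $\widetilde{y}$ and to $u$, the implication $\widetilde{y}(t)\equiv 0\Rightarrow u(t)\equiv 0$ of Definition~\ref{def 2.2} is equivalent to $\widehat{y}(s)\equiv 0\Rightarrow \widehat{u}(s)\equiv 0$. Combined with the identity $\widehat{y}(s)=T(s,\exp(-sh))\widehat{u}(s)$ obtained above, this is exactly the stated condition. Both directions of the equivalence follow from this chain of equivalences read in either order.

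The only real subtlety I anticipate is justifying the use of the Laplace transform in the present infinite-dimensional and implicit setting: one must argue that the Laplace transforms of $u$, $w$, and $\widetilde{y}$ exist in the sense of $U_{-1}(s)$ and that applying $(s\widetilde{E}-\widetilde{A}_{0}-\widetilde{A}_{1}\exp(-sh))^{-1}$ is legitimate for almost every $s$ under the solvability hypothesis. Once these transform-level manipulations are permitted and the uniqueness theorem for the Laplace transform is granted in this function class, the lemma reduces to the two short observations above.
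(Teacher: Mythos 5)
Your argument is correct and is exactly the route the paper takes: the paper's proof is the single sentence that the lemma is ``directly obtained by using the Laplace transform of the input--output relation,'' and your proposal simply spells out the two ingredients that sentence relies on (the identity $\widehat{y}(s)=T(s,\exp(-sh))\widehat{u}(s)$ under solvability, and the injectivity of the Laplace transform on the class $U_{-1}(s)$). Your closing remark about justifying the transform-level manipulations in the implicit, infinite-dimensional setting is a legitimate subtlety that the paper itself does not address.
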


where $\widehat{u}(.)\in U_{-1}(s)$ and $T(s,\exp
(-sh))=\widetilde{C}(s
\widetilde{E}-\widetilde{A}_{0}-\widetilde{A}_{1}\exp (-sh))^{^{-1}}
\widetilde{B}{ }$ is the transfer function of the system
$(\widetilde{\Sigma })$.

\begin{proof}
It directly obtained by using the Laplace transform of input-output
relation.
\end{proof}

\begin{proposition} \label{prop 2.4}
Under the condition that
\begin{equation}
\forall s \in \mathbb{C}
 \ \ker
\begin{bmatrix}
 s\widetilde{E}-\widetilde{A}_{0}-\widetilde{A}_{1}\exp (-sh)& \widetilde{B} \\
\widetilde{C}& 0
\end{bmatrix}
=\left\{ 0\right\}
\end{equation} The system $(\widetilde{\Sigma })$ is left invertible.
\end{proposition}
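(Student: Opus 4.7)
The plan is to invoke Lemma~\ref{lem 2.3} and translate the transfer-function condition into the pencil-kernel condition from the hypothesis. Concretely, it suffices to show that if $T(s,\exp(-sh))\widehat{u}(s)\equiv 0$ for some strictly proper $\widehat{u}$, then $\widehat{u}\equiv 0$.

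First I would fix $s\in\mathbb{C}$ (in the region where the pencil is boundedly invertible, so that $T$ is defined — this tacitly uses solvability, which is implicit in the formulation of $T$) and introduce the auxiliary state
\[
\widehat{w}(s)\ :=\ \bigl(s\widetilde{E}-\widetilde{A}_{0}-\widetilde{A}_{1}\exp(-sh)\bigr)^{-1}\widetilde{B}\,\widehat{u}(s).
\]
From its definition and from the assumed vanishing of $T(s,\exp(-sh))\widehat{u}(s)$, the pair $(\widehat{w}(s),\widehat{u}(s))$ satisfies simultaneously
\[
\bigl(s\widetilde{E}-\widetilde{A}_{0}-\widetilde{A}_{1}\exp(-sh)\bigr)\widehat{w}(s)=\widetilde{B}\widehat{u}(s), \qquad \widetilde{C}\widehat{w}(s)=0.
\]

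Next I would rewrite these two identities as a single block-matrix equation, flipping a sign on $\widehat{u}$ to match the block-pencil in the hypothesis:
\[
\begin{bmatrix}
 s\widetilde{E}-\widetilde{A}_{0}-\widetilde{A}_{1}\exp(-sh) & \widetilde{B}\\
 \widetilde{C} & 0
\end{bmatrix}
\begin{pmatrix}\widehat{w}(s)\\ -\,\widehat{u}(s)\end{pmatrix}=0.
\]
The hypothesis says this kernel is $\{0\}$ for every $s\in\mathbb{C}$, so in particular $\widehat{w}(s)=0$ and $\widehat{u}(s)=0$ pointwise. Since this holds for all admissible $s$, $\widehat{u}\equiv 0$, and left invertibility follows from Lemma~\ref{lem 2.3}.

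The only subtle point is the implicit use of solvability when writing down $\widehat{w}(s)$: outside the resolvent set of the delay pencil the inverse fails to exist. This is harmless because $T(s,\exp(-sh))$ is itself only defined on that resolvent set, and the identity $T\widehat{u}\equiv 0$ is taken there; by analyticity of $\widehat{u}$ as a strictly proper element of $U_{-1}(s)$, vanishing on the resolvent set forces $\widehat{u}\equiv 0$ globally. The rest of the argument is bookkeeping with signs in the block partition.
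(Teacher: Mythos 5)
Your argument is correct and is exactly the route the paper intends: the paper's entire proof is the one-line remark that the claim ``can be easily proved using Lemma~\ref{lem 2.3}'', and your write-up simply supplies the bookkeeping (defining $\widehat{w}(s)$, stacking the state equation and the output condition into the block pencil, and invoking the kernel hypothesis pointwise before passing back through Lemma~\ref{lem 2.3}). Your closing remark about solvability and the resolvent set is a genuine refinement the paper glosses over, but it does not change the approach.
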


\begin{proof}
This can be easily proved using the Lemma \ref{lem 2.3}.
\end{proof}

\begin{proposition}  \label{prop 2.5}
 If the system $(\widetilde{\Sigma })$ is left invertible, then
\begin{equation}
ker
\begin{bmatrix}
 s\widetilde{E}-\widetilde{A}_{0}-\widetilde{A}_{1}\exp (-sh)& \widetilde{B} \\
\widetilde{C}& 0
\end{bmatrix}
=\left\{ 0\right\}
\end{equation} for some  $s \in \mathbb{C}$.
\end{proposition}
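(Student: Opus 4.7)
The plan is to argue by contrapositive, pivoting on Lemma~\ref{lem 2.3}. I would assume that the displayed augmented kernel is nontrivial for \emph{every} $s \in \mathbb{C}$ and then manufacture a nonzero strictly proper $\widehat u \in U_{-1}(s)$ with $T(s, e^{-sh}) \widehat u(s) \equiv 0$; by Lemma~\ref{lem 2.3}, such an input contradicts the left invertibility of $(\widetilde{\Sigma})$.

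The first step is to convert the augmented kernel condition into non-injectivity of the transfer function. Fix $s_{0}$ in the resolvent set of the pencil $s \widetilde{E} - \widetilde{A}_{0} - \widetilde{A}_{1} e^{-sh}$, which is nonempty under solvability. Any nonzero element $(w_{0}, u_{0})$ of the augmented kernel at $s_{0}$ must have $u_{0} \neq 0$, because otherwise $w_{0}$ would be a nonzero vector in the kernel of the pencil at the regular point $s_{0}$. Then from $w_{0} = -(s_{0} \widetilde{E} - \widetilde{A}_{0} - \widetilde{A}_{1} e^{-s_{0} h})^{-1} \widetilde{B} u_{0}$ together with $\widetilde{C} w_{0} = 0$ one reads off $T(s_{0}, e^{-s_{0} h}) u_{0} = 0$. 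So under the standing hypothesis, $T(s, e^{-sh})$ fails to be injective at every regular $s$.

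The second step is to promote these pointwise kernels to a single counterexample input. Since $s \mapsto T(s, e^{-sh})$ is operator-valued analytic on the resolvent set, an analytic selection argument produces a nonzero analytic map $s \mapsto u(s) \in \ker T(s, e^{-sh})$ on a suitable neighborhood, which after multiplication by a scalar rational function vanishing at infinity becomes a strictly proper $\widehat u \in U_{-1}(s)$ annihilated by the transfer function, closing the contradiction via Lemma~\ref{lem 2.3}. The main obstacle is precisely this analytic selection: in a general Hilbert-space setting the kernel of an analytic operator family need not have locally constant dimension or a smooth basis, so the argument must exploit the boundedness of $\widetilde{B}$ and $\widetilde{C}$ and the specific structure of the pencil (for example via an analytic Fredholm-type result) to extract an analytic section that can subsequently be normalized into $U_{-1}(s)$.
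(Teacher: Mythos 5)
Your overall framing is sensible: under solvability the pencil $s\widetilde{E}-\widetilde{A}_{0}-\widetilde{A}_{1}e^{-sh}$ is invertible, so the augmented kernel at $s$ is isomorphic to $\ker T(s,e^{-sh})$, and the proposition reduces via Lemma~\ref{lem 2.3} to the contrapositive claim that if $T(s,e^{-sh})$ has a nontrivial kernel at \emph{every} $s$, then some nonzero strictly proper $\widehat{u}$ is annihilated by the transfer function. Your first step is correct: a nonzero augmented-kernel element $(w_{0},u_{0})$ at a regular point forces $u_{0}\neq 0$ and yields $T(s_{0},e^{-s_{0}h})u_{0}=0$.

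The genuine gap is your second step, which you flag as the ``main obstacle'' but do not close: nothing in the argument actually produces the required $\widehat{u}\in U_{-1}(s)$. From a family of pointwise kernels one cannot in general extract even a locally analytic nonzero section in a Hilbert space; $\dim\ker T(s,e^{-sh})$ may jump or be infinite, the operators $\widetilde{A}_{0},\widetilde{A}_{1}$ are unbounded, and no Fredholm property of the pencil is established, so the ``analytic Fredholm-type result'' you invoke is not available as stated. Moreover, even granting a local analytic section $u(s)$ on a small disc, that does not give an element of $U_{-1}(s)$: a strictly proper function must be defined on a right half-plane (indeed arise as the Laplace transform of an admissible input) and decay at infinity, and multiplying by a scalar rational function does not extend the domain of definition of $u(\cdot)$. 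So the contradiction with Lemma~\ref{lem 2.3} is never actually reached. The paper itself offers only a one-line appeal to Lemma~\ref{lem 2.3} and does not resolve this difficulty either; a complete argument would have to either restrict to a setting (e.g.\ finite-dimensional $U$) where a meromorphic kernel vector can be built by Cramer-type formulas and then normalized to be strictly proper, or else prove an analytic selection theorem adapted to this pencil. Your proposal identifies the right reduction but leaves the essential construction unproved.
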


\begin{proof}
It follows from Lemma \ref{lem 2.3}.
\end{proof}
Note that, in order to simplify the exposition, we just consider
here systems having only one delay in the state. Our results may
easily be extended to systems with several integer delays in the
state, in the input and in the output.

\section{Classical systems without delays and one side invertibility}
We can associate with the system $(\widetilde{\Sigma })$ the
following quadruples of operators
$(\widetilde{D}_{k},\widetilde{F}_{k},\widetilde{G}_{k},\widetilde{H}_{k})$
representing the family of classical (without delays) implicit
systems (see for instance \cite{Olb}, \cite{Tsoi}), say
$(\widetilde{\Sigma }_{k})$:
\begin{equation}
(\widetilde{\Sigma }_{k})\left\{
\begin{array}{l}
\widetilde{D}_{k}\overset{.}{x_{k}}(t)=\widetilde{F}_{k}x_{k}(t)+\widetilde{G%
}_{k}v_{k}\left( t\right) \\
y_{k}\left( t\right) =\widetilde{H}_{k}\text{ }x_{k}(t)%
\end{array}
\right.
\end{equation}
where

\begin{center}
$\widetilde{F}_{k}=%
\begin{pmatrix}
\widetilde{A}_{0} & 0 & . & 0 \\
\widetilde{A}_{1} & . & . & . \\
. & . & . & 0 \\
0 & . & \widetilde{A}_{1} & \widetilde{A}_{0}%
\end{pmatrix}%
$,{\tiny \ \ }$\widetilde{D}_{k}=%
\begin{pmatrix}
\widetilde{E} & 0 & . & 0 \\
0 & \widetilde{E} & . & . \\
. & . & . & 0 \\
0 & . & . & \widetilde{E}%
\end{pmatrix}%
$,

$\widetilde{H}_{k}\ =%
\begin{pmatrix}
\widetilde{C} & 0 & . & 0 \\
0 & \widetilde{C} & . & . \\
. & . & . & 0 \\
0 & . & . & \widetilde{C}%
\end{pmatrix}%
$,{\tiny \ \ }$\widetilde{G}_{k}=%
\begin{pmatrix}
\text{$\widetilde{B}$} & 0 & . & 0 \\
0 & \text{$\widetilde{B}$} & . & . \\
. & . & . & 0 \\
0 & . & . & \text{$\widetilde{B}$}%
\end{pmatrix}%
$,

$x_{k}(t)=%
\begin{pmatrix}
w_{0}(t) \\
. \\
. \\
w_{k}(t)%
\end{pmatrix}%
${\tiny \ },$\ v_{k}\left( t\right) =%
\begin{pmatrix}
u_{0}(t) \\
. \\
. \\
u_{k}(t)%
\end{pmatrix}%
$,
\end{center}

$w_{k}(t)=w(t+kh)$, $u_{k}(t)=u(t+kh)$ for all $t\in \left[ 0,h\right] $, $%
w_{k}(0)=w_{k-1}(h)$, \ and $u_{k}(0)=u_{k-1}(h)$.

Let us denote the transfer function of $(\widetilde{\Sigma }_{k})$
as :
\begin{equation}
\Phi _{k}\left( s\right) =\widetilde{H}_{k}(s\widetilde{D}_{k}-\widetilde{F}%
_{k})^{^{-1}}\widetilde{G}_{k}
\end{equation}
First we shall start by giving a result for invertibility of the
operator $(s\widetilde{D}_{k}-\widetilde{F}_{k})$ .

\begin{lemma}  \label{lem 3.1}
The operator $(s\widetilde{D}_{k}-\widetilde{F}_{k})$ is invertible
if and only if $(s\widetilde{E}-\widetilde{A}_{0})$ is invertible.
\end{lemma}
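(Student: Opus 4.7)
The plan is to exploit the block lower bidiagonal structure of $s\widetilde{D}_k - \widetilde{F}_k$. From the definitions of $\widetilde{D}_k$ and $\widetilde{F}_k$, this operator has $s\widetilde{E}-\widetilde{A}_0$ on each of its $k+1$ diagonal blocks, $-\widetilde{A}_1$ on the first subdiagonal, and zero elsewhere. Consequently, writing $x=(x_0,\dots,x_k)^{T}$ and $y=(y_0,\dots,y_k)^{T}$, the equation $(s\widetilde{D}_k - \widetilde{F}_k)x = y$ is equivalent to the cascade
\[
(s\widetilde{E}-\widetilde{A}_0)x_0 = y_0, \qquad (s\widetilde{E}-\widetilde{A}_0)x_j = y_j + \widetilde{A}_1 x_{j-1}, \quad j = 1,\dots,k.
\]

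For the sufficiency direction, assuming $R:=(s\widetilde{E}-\widetilde{A}_0)^{-1}$ exists, I would solve this cascade by forward substitution: $x_0 = R y_0$ and $x_j = R y_j + R\widetilde{A}_1 x_{j-1}$ for $j\geq 1$. This produces a unique $x$ for every $y$ and yields an explicit block lower triangular two-sided inverse whose $(j,\ell)$-block (for $j\geq\ell$) is $(R\widetilde{A}_1)^{j-\ell}R$.

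For the necessity direction, I would proceed in two steps. Surjectivity of $s\widetilde{E}-\widetilde{A}_0$ is obtained by testing the invertibility of $s\widetilde{D}_k-\widetilde{F}_k$ against right-hand sides $y=(y_0,0,\dots,0)^{T}$: the first block equation then forces $(s\widetilde{E}-\widetilde{A}_0)x_0 = y_0$ with $y_0$ arbitrary. For injectivity, I would argue by contradiction: any nonzero $x_0\in\ker(s\widetilde{E}-\widetilde{A}_0)$ can be extended, via the surjectivity just obtained, to a tuple $(x_0,x_1,\dots,x_k)$ satisfying $(s\widetilde{E}-\widetilde{A}_0)x_j = \widetilde{A}_1 x_{j-1}$ for $j\geq 1$, and such a tuple sits in $\ker(s\widetilde{D}_k-\widetilde{F}_k)$.

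The main obstacle is bookkeeping on domains rather than anything conceptual: since $\widetilde{A}_0$ and $\widetilde{A}_1$ are unbounded, I must verify that the recursively constructed $x_j$ all land in $\mathrm{dom}(\widetilde{A}_0)\cap\mathrm{dom}(\widetilde{A}_1)\cap\mathrm{dom}(\widetilde{E})$, so that the cascade manipulations are legitimate. Working on the natural product domain for $s\widetilde{D}_k-\widetilde{F}_k$ and using the fact that $R$ maps into $\mathrm{dom}(\widetilde{E})\cap\mathrm{dom}(\widetilde{A}_0)$ by definition of invertibility, together with the specific block form inherited from the original $\widetilde{E},\widetilde{A}_0,\widetilde{A}_1$, makes this check routine.
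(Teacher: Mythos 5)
Your proof is correct and follows essentially the same route as the paper's: both exploit the block lower bidiagonal structure of $s\widetilde{D}_{k}-\widetilde{F}_{k}$ and the resulting cascade of equations in the blocks. If anything yours is more complete --- the paper's forward direction only checks that the kernel is trivial, with the explicit inverse (your blocks $(R\widetilde{A}_{1})^{j-\ell}R$, the paper's $R_{i}(s)$) appearing only later in the proof of Proposition \ref{prop 3.2}, while for the converse the paper uses the slightly slicker test vector $(0,\dots,0,x_{0})^{T}$, which lands in the kernel directly and so avoids both your surjectivity step and the need to apply $\widetilde{A}_{1}$ to $x_{0}$.
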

\begin{proof}
i) Suppose that $(s\widetilde{E}-\widetilde{A}_{0})$ is invertible.
Let $\ v=\left( {\small x}_{0}....{\small x}_{k}\right) ^{T}$ be a
vector in the Kernel of $(s\widetilde{D}_{k}-\widetilde{F}_{k})$.
This implies

\begin{center}
$\left\{
\begin{array}{c}
(s\widetilde{E}-\widetilde{A}_{0})x_{0}=0 \\
(s\widetilde{E}-\widetilde{A}_{0})x_{1}-\widetilde{A}_{1}x_{0}=0 \\
(s\widetilde{E}-\widetilde{A}_{0})x_{2}-\widetilde{A}_{1}x_{1}=0 \\
. \\
. \\
(s\widetilde{E}-\widetilde{A}_{0})x_{k}-\widetilde{A}_{1}x_{k-1}=0%
\end{array}%
\right. $
\end{center}This is equivalent to saying that $%
x_{0}=x_{1}=....=x_{k}=0$.

ii) Reciprocally, suppose that
$(s\widetilde{D}_{k}-\widetilde{F}_{k})$ is invertible and let $\
x_{0}$ \ be an element such \ that $\
(s\widetilde{E}-\widetilde{A}_{0})x_{0}=0 $, then we have

\begin{center}
$%
\begin{pmatrix}
{\small s}\widetilde{E}{\small -}\widetilde{A}_{0} & {\small 0} &
{\small .}
& {\small 0} \\
{\small -}\widetilde{A}_{1} & {\small s}\widetilde{E}{\small -}\widetilde{A}%
_{0} & {\small .} & {\small 0} \\
{\small .} & {\small .} & {\small .} & {\small .} \\
{\small 0} & {\small .} & {\small -}\widetilde{A}_{1} & {\small s}\widetilde{%
E}{\small -}\widetilde{A}_{0}%
\end{pmatrix}%
\begin{pmatrix}
{\small 0} \\ 
{\small .} \\
{\small .} \\
{\small x}_{0}%
\end{pmatrix}%
=0$
\end{center}
This  implies that $x_{0}=0$.
\end{proof}
The next proposition is the central result of this section.

\begin{proposition}  \label{prop 3.2}
The system $(\widetilde{\Sigma }_{k})$ is left invertible if and
only if the
subsystem $\left( \widetilde{E},\widetilde{A}_{0},\text{$\widetilde{B},$}%
\widetilde{C}\right) $ is also invertible.
\end{proposition}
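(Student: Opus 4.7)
The plan is to work in the frequency domain via Lemma~\ref{lem 2.3} and exploit the block lower bidiagonal structure of $s\widetilde{D}_{k}-\widetilde{F}_{k}$. By Lemma~\ref{lem 2.3}, left invertibility of $(\widetilde{\Sigma}_{k})$ is equivalent to the implication
\[
\Phi_{k}(s)\,\widehat{v}_{k}(s)\equiv 0 \;\Longrightarrow\; \widehat{v}_{k}(s)\equiv 0,
\]
and left invertibility of the subsystem $(\widetilde{E},\widetilde{A}_{0},\widetilde{B},\widetilde{C})$ corresponds to the analogous implication for its transfer function $T(s):=\widetilde{C}(s\widetilde{E}-\widetilde{A}_{0})^{-1}\widetilde{B}$. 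By Lemma~\ref{lem 3.1}, both expressions are well defined on the same set of $s$.

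Next, I would carry out an explicit inversion by forward substitution. Writing out $(s\widetilde{D}_{k}-\widetilde{F}_{k})\,x=y$ block by block gives a triangular recursion starting from $(s\widetilde{E}-\widetilde{A}_{0})x_{0}=y_{0}$; solving successively shows that $(s\widetilde{D}_{k}-\widetilde{F}_{k})^{-1}$ is block lower triangular, with diagonal blocks equal to $(s\widetilde{E}-\widetilde{A}_{0})^{-1}$ and off-diagonal blocks of the form $\left[(s\widetilde{E}-\widetilde{A}_{0})^{-1}\widetilde{A}_{1}\right]^{i-j}(s\widetilde{E}-\widetilde{A}_{0})^{-1}$ for $i>j$. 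Multiplying on the left by $\widetilde{H}_{k}=\mathrm{diag}(\widetilde{C},\dots,\widetilde{C})$ and on the right by $\widetilde{G}_{k}=\mathrm{diag}(\widetilde{B},\dots,\widetilde{B})$ preserves this block lower triangular shape, so $\Phi_{k}(s)$ is block lower triangular with $T(s)$ on every diagonal block.

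For the "if" direction, assume the subsystem is left invertible and suppose $\Phi_{k}(s)\,\widehat{v}_{k}(s)\equiv 0$ with $\widehat{v}_{k}=(\widehat{u}_{0},\dots,\widehat{u}_{k})^{T}\in U_{-1}(s)$. Reading the first block row yields $T(s)\widehat{u}_{0}(s)\equiv 0$, hence $\widehat{u}_{0}\equiv 0$ by hypothesis. Plugging this into the second block row gives $T(s)\widehat{u}_{1}(s)\equiv 0$, so $\widehat{u}_{1}\equiv 0$, and an obvious induction on the block rows propagates this to $\widehat{u}_{0}=\cdots=\widehat{u}_{k}\equiv 0$, proving left invertibility of $(\widetilde{\Sigma}_{k})$.

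For the "only if" direction, assume $(\widetilde{\Sigma}_{k})$ is left invertible and let $\widehat{u}_{0}\in U_{-1}(s)$ satisfy $T(s)\widehat{u}_{0}(s)\equiv 0$. Set $\widehat{v}_{k}(s):=(\widehat{u}_{0}(s),0,\dots,0)^{T}$; the block lower triangular structure of $\Phi_{k}(s)$ shows that each component of $\Phi_{k}(s)\widehat{v}_{k}(s)$ is obtained from $\widehat{u}_{0}(s)$ by applying either $T(s)$ (top block) or a composition involving $T(s)$ with extra factors of $(s\widetilde{E}-\widetilde{A}_{0})^{-1}\widetilde{A}_{1}$ on the right, so all of them vanish. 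Hence $\Phi_{k}(s)\widehat{v}_{k}(s)\equiv 0$, which forces $\widehat{v}_{k}\equiv 0$ by hypothesis, and in particular $\widehat{u}_{0}\equiv 0$. The main (and only non-routine) obstacle is verifying that the off-diagonal blocks of $\Phi_{k}(s)$ actually factor through $T(s)$ on the right, which is what makes the lift $(\widehat{u}_{0},0,\dots,0)^{T}$ produce an identically zero output; this is immediate from the explicit forward-substitution formula described above.
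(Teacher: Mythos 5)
Your overall strategy --- explicit forward-substitution inversion of $s\widetilde{D}_{k}-\widetilde{F}_{k}$, a block lower triangular $\Phi_{k}(s)$ with $T(s)=\widetilde{C}(s\widetilde{E}-\widetilde{A}_{0})^{-1}\widetilde{B}$ on the diagonal, and induction along block rows --- is exactly the paper's (the paper writes $R_{i}(s)=[(s\widetilde{E}-\widetilde{A}_{0})^{-1}\widetilde{A}_{1}]^{i}(s\widetilde{E}-\widetilde{A}_{0})^{-1}$ and $T_{i}(s)=\widetilde{C}R_{i}(s)\widetilde{B}$ and then simply asserts the conclusion). Your ``if'' direction is correct and is in fact more detailed than what the paper offers.

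The ``only if'' direction, however, has a genuine gap, and you flagged the exact spot yourself. You lift a kernel element $\widehat{u}_{0}$ of $T(s)$ to $(\widehat{u}_{0},0,\dots,0)^{T}$ and claim the resulting output vanishes because the off-diagonal blocks $T_{i}(s)=\widetilde{C}[(s\widetilde{E}-\widetilde{A}_{0})^{-1}\widetilde{A}_{1}]^{i}(s\widetilde{E}-\widetilde{A}_{0})^{-1}\widetilde{B}$ ``factor through $T(s)$ on the right.'' They do not: the extra factors $[(s\widetilde{E}-\widetilde{A}_{0})^{-1}\widetilde{A}_{1}]^{i}$ sit \emph{between} $\widetilde{C}$ and $(s\widetilde{E}-\widetilde{A}_{0})^{-1}\widetilde{B}$, so there is in general no operator $M_{i}(s)$ with $T_{i}(s)=M_{i}(s)T(s)$; nothing in the forward-substitution formula lets you pull those factors to the left of $\widetilde{C}$. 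Consequently $T(s)\widehat{u}_{0}\equiv 0$ gives you only the vanishing of the first block of $\Phi_{k}(s)\widehat{v}_{k}(s)$, not of the remaining ones. The repair is a one-line change: put $\widehat{u}_{0}$ in the \emph{last} slot, i.e.\ take $\widehat{v}_{k}(s)=(0,\dots,0,\widehat{u}_{0}(s))^{T}$. Since $\Phi_{k}(s)$ is block lower triangular, the last input column meets only the diagonal block, so $\Phi_{k}(s)\widehat{v}_{k}(s)=(0,\dots,0,T(s)\widehat{u}_{0}(s))^{T}\equiv 0$, and left invertibility of $(\widetilde{\Sigma}_{k})$ then forces $\widehat{u}_{0}\equiv 0$. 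With that substitution your argument is complete.
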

\begin{proof}
Let \ $X_{k}=\left( {\small x}_{0}....{\small x}_{k}\right) ^{T}$ , $%
Y_{k}=\left( {\small y}_{0}....{\small y}_{k}\right) ^{T}$ \ be two
elements such that $(s\widetilde{D}_{k}-\widetilde{F}_{k})X_{k}=$
$Y_{k}$. If the operator $(s\widetilde{E}-\widetilde{A}_{0})$ is
invertible, then it is not hard to show that

\begin{center}
$%
\begin{pmatrix}
{\small x}_{0} \\
{\small .} \\
{\small .} \\
{\small x}_{k}%
\end{pmatrix}%
{\small =}%
\begin{pmatrix}
{\small R}_{0}{\small (s)} & {\small 0} &  {\small .} & {\small 0%
} \\
{\small R}_{1}{\small (s)} & {\small R}_{0}{\small (s)} &
{\small .} & {\small .} \\
{\small .}&{\small .} & {\small .}& {\small .}  \\

{\small R}_{k}{\small (s)} & {\small .} &  {\small R}_{1}{\small %
(s)} & {\small R}_{0}{\small (s)}%
\end{pmatrix}%
\begin{pmatrix}
{\small y}_{0} \\
{\small .} \\
{\small .} \\
{\small y}_{k}%
\end{pmatrix}%
$
\end{center}

where{\small \ }$R_{i}(s)=[(s\widetilde{E}-\widetilde{A}_{0})^{-1}\widetilde{%
A}_{1}]^{i}(s\widetilde{E}-\widetilde{A}_{0})^{-1}$.

Furthermore the transfer function of the system $(\widetilde{\Sigma
}_{k})$ is given by:

\begin{center}
$\Phi _{k}\left( s\right) =%
\begin{pmatrix}
{\small T}_{0}{\small (s)} & {\small 0} & {\small .}  & {\small 0%
} \\
{\small T}_{1}{\small (s)} & {\small T}_{0}{\small (s)} &
{\small .} & {\small .} \\
{\small .}&{\small .} & {\small .}& {\small .} \\

{\small T}_{k}{\small (s)} & {\small .} &  {\small T}_{1}{\small %
(s)} & {\small T}_{0}{\small (s)}%
\end{pmatrix}%
$
\end{center}

where $T_{0}(s)=\widetilde{C}R_{0}(s)\widetilde{B}$ \ and \ $T_{i}(s)=%
\widetilde{C}R_{i}(s)\widetilde{B}$. According to the proof of Lemma
\ref{lem 3.1},  we conclude that the system $(\widetilde{\Sigma
}_{k})$ is left invertible if
and only if the subsystem $\left( \widetilde{E},\widetilde{A}_{0},\widetilde{B%
},\widetilde{C}\right) .$
\end{proof}

As an easy corollary of this proposition one has the following
result.

\begin{corollary}  \label{cor 3.3}
The system $(\widetilde{\Sigma }_{k})$ is left invertible if the
following condition hold:
 \begin{equation}
\forall s \in \mathbb{C}
 \ \ \ker
\begin{bmatrix}
 s\widetilde{E}-\widetilde{A}_{0}& \widetilde{B} \\
\widetilde{C}& 0
\end{bmatrix}
=\left\{ 0\right\}
\end{equation}
\end{corollary}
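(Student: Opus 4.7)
The plan is to chain together two results already established in the paper: Proposition 3.2 reduces left invertibility of the enlarged system $(\widetilde{\Sigma}_k)$ to left invertibility of the no-delay subsystem $(\widetilde{E},\widetilde{A}_0,\widetilde{B},\widetilde{C})$, so it suffices to argue that the hypothesis of the corollary implies left invertibility of that subsystem. This last step is essentially Proposition~2.4 specialized to the case $\widetilde{A}_1=0$, so no new machinery is needed.

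More precisely, the first step I would take is to invoke Proposition~\ref{prop 3.2} and replace the target statement by: if
\[
\ker\begin{bmatrix} s\widetilde{E}-\widetilde{A}_0 & \widetilde{B}\\ \widetilde{C} & 0\end{bmatrix}=\{0\}\qquad\forall s\in\mathbb{C},
\]
then the classical (delay-free) subsystem $(\widetilde{E},\widetilde{A}_0,\widetilde{B},\widetilde{C})$ is left invertible. The transfer function of that subsystem is $T_0(s)=\widetilde{C}(s\widetilde{E}-\widetilde{A}_0)^{-1}\widetilde{B}$, already identified in the proof of Proposition~\ref{prop 3.2}, and by Lemma~\ref{lem 2.3} left invertibility of the subsystem is equivalent to the implication $T_0(s)\widehat{u}(s)\equiv 0\Rightarrow\widehat{u}(s)\equiv 0$ for strictly proper $\widehat{u}$.

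For the final step, I would fix $s$ and suppose $T_0(s)\widehat{u}(s)=0$. Setting $\widehat{x}(s)=(s\widetilde{E}-\widetilde{A}_0)^{-1}\widetilde{B}\widehat{u}(s)$ (well defined by Lemma~\ref{lem 3.1}, since invertibility of $s\widetilde{E}-\widetilde{A}_0$ is implicit in the kernel hypothesis), one has
\[
\begin{bmatrix} s\widetilde{E}-\widetilde{A}_0 & \widetilde{B}\\ \widetilde{C} & 0\end{bmatrix}\begin{pmatrix}-\widehat{x}(s)\\ \widehat{u}(s)\end{pmatrix}=\begin{pmatrix}0\\0\end{pmatrix},
\]
so by the kernel hypothesis $\widehat{u}(s)=0$. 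This being true for every $s$ gives $\widehat{u}\equiv 0$, establishing left invertibility of the subsystem, and via Proposition~\ref{prop 3.2} of $(\widetilde{\Sigma}_k)$.

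There is essentially no obstacle once Proposition~\ref{prop 3.2} and Lemma~\ref{lem 2.3} are in hand; the only thing to be careful about is the pointwise-in-$s$ character of the hypothesis versus the functional formulation of Lemma~\ref{lem 2.3}, which is handled by applying the kernel condition for each value of $s$ separately.
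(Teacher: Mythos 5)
Your argument is correct and follows the same route as the paper, whose entire proof is the single line that the corollary ``immediately results from Proposition \ref{prop 2.4} and Proposition \ref{prop 3.2}''; you have simply made explicit the kernel-vector computation behind the (delay-free specialization of) Proposition \ref{prop 2.4}, namely that $T_{0}(s)\widehat{u}(s)=0$ forces $\bigl(-\widehat{x}(s),\widehat{u}(s)\bigr)$ into the kernel of the block operator. One small inaccuracy: the invertibility of $s\widetilde{E}-\widetilde{A}_{0}$ does not follow from the kernel hypothesis (which only yields $\ker(s\widetilde{E}-\widetilde{A}_{0})\cap\ker\widetilde{C}=\{0\}$) nor from Lemma \ref{lem 3.1}; it is really the standing solvability assumption that the paper itself uses tacitly whenever it writes the transfer function, so you should cite it as such rather than as a consequence of the kernel condition.
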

\begin{proof}
It immediately results from Proposition  \ref{prop 2.4} and
Proposition  \ref{prop 3.2}.
\end{proof}

\end{document}